\DeclareMathOperator{\Alt}{Alt}
\DeclareMathOperator{\Bil}{Bil}
\DeclareMathOperator{\Quad}{Quad}
\DeclareMathOperator{\M}{\mathcal{M}}
\DeclareMathOperator{\tr}{tr}
\theoremstyle{plain}
\newtheorem{theorem}{Theorem}
\newtheorem{corollary}{Corollary}
\newtheorem{lemma}{Lemma}
\theoremstyle{definition}
\begin{document}
\title[Real mutually unbiased bases and groups of odd order]
{Real mutually unbiased bases and \\ 
representations of groups of odd order by\\
real scaled Hadamard matrices of 2-power size}

\author[R. Gow]{Rod Gow}
\address{School of Mathematics and Statistics\\
University College Dublin\\
 Ireland}
\email{rod.gow@ucd.ie}


\begin{abstract} 
We prove two results relating real mutually unbiased bases
and representations of finite groups of odd order, as follows. Let $q$ be a power of 2 and $r$ a positive integer. Then we can find a $q^{2r}\times q^{2r}$ real orthogonal matrix $D$, say, of multiplicative order $q^{2r-1}+1$,  whose 
$q^{2r-1}
+1$ powers $D$, \dots, $D^{q^{2r-1}+1}=I$ define $q^{2r-1}+1$ mutually unbiased bases in $\mathbb{R}^{q^{2r}}$. Thus the scaled matrices $q^rD$, \dots, $q^rD^{q^{2r-1}}$ are $q^{2r-1}$ different Hadamard matrices. When we take $q=2$, we achieve
 the maximum number of real mutually unbiased bases in dimension $2^{2r}$ using the elements of a cyclic group. 
 
 Our second result is this. Let $G$ be an arbitrary finite group of odd order $2k+1$, where $k\geq 3$. Then $G$ has a real representation $R$, say, of degree 
 $2^{2^{k-1}}$ such that the elements $R(\sigma)$, $\sigma\in G$, define $|G|$ mutually unbiased bases in $\mathbb{R}^{d}$, where $d= 2^{2^{k-1}}$. In addition,
 a group of order 5 defines five real mutually unbiased bases in $\mathbb{R}^{16}$ and  a group of order 3 defines three real mutually unbiased bases in $\mathbb{R}^{4}$. Thus, an
 arbitrary group of odd order has a faithful representation by real scaled Hadamard matrices of 2-power size.
\end{abstract}
\maketitle

\section{Introduction}

\noindent Let $d$ be a positive integer and let $\mathbb{R}^d$ denote the Hilbert space of dimension $d$ over the field $\mathbb{R}$ of real numbers. Let $g:\mathbb{R}^d\times \mathbb{R}^d\to \mathbb{R}$ denote a positive definite
symmetric bilinear form.  Let $B_1=\{\,u_1, \ldots, u_d\,\}$ and $B_2=\{\,v_1, \ldots, v_d\,\}$ be  bases of $\mathbb{R}^d$ that are orthonormal with respect to $g$. We say that the bases $B_1$ and $B_2$ are \emph{mutually unbiased} if 
\[
g(u_i,v_j)^2=\frac{1}{d}
\]
for $1\le i,j\le n$. We may identify the orthonormal bases $B_1$ and $B_2$ with 
$d\times d$ real orthogonal matrices $O_1$ and $O_2$, say. Then the condition that the bases are mutually unbiased is equivalent to saying that each entry of $O_1O_2^{-1}$ equals $\pm 1/\sqrt{d}$. 

It is known that the maximum number of mutually unbiased bases in $\mathbb{R}^d$ is $1+\frac{d}{2}$, but a combination of theory and
experimental evidence suggests that this upper bound is rarely met. We note, however, that the upper bound is met
when $d=2^{2r}$, where $r$ is a positive integer, \cite{CS}. The introduction to the paper \cite{BSTW} provides useful information 
on the problem of determining the maximum number of real mutually unbiased bases. 

When investigating several mutually unbiased bases, we may assume that the basis $B_1$ is the standard basis of $\mathbb{R}^d$, corresponding to the identity matrix $I_d$. Then if the basis $B_i$ corresponds to the orthogonal matrix $O_i$, $B_i$ is mutually unbiased with respect to $B_1$ if and only if $O_i$ is an Hadamard matrix scaled by $1/\sqrt{d}$. Similarly, $B_i$ and $B_j$ are mutually unbiased if and only if $O_iO_j^{-1}$ 
is also an Hadamard matrix scaled by $1/\sqrt{d}$. This fact explains why if $d\neq 2$, and $4$ does not divide $d$, there do not exist
two mutually unbiased bases in $\mathbb{R}^d$. 

The purpose of this paper is twofold. First, to show that if $q$ is a power of 2 and $r$ is a positive integer, we can find a $q^{2r}\times q^{2r}$ real orthogonal matrix $D$, say, of multiplicative order $q^{2r-1}+1$,  whose 
$q^{2r-1}
+1$ powers $D$, \dots, $D^{q^{2r-1}+1}=I$ define $q^{2r-1}+1$ mutually unbiased bases in $\mathbb{R}^{q^{2r}}$. Thus the scaled matrices $q^rD$, \dots, $q^rD^{q^{2r-1}}$ are $q^{2r-1}$ different Hadamard matrices. When we take $q=2$, we achieve
 the maximum number of real mutually unbiased bases in dimension $2^{2r}$ using the elements of a cyclic group. 
 
 Our second objective is a proof of the following result. Let $G$ be an arbitrary finite group of odd order $2k+1$, where $k\geq 3$. Then $G$ has a real representation $R$, say, of degree 
 $2^{2^{k-1}}$ such that the elements $R(\sigma)$, $\sigma\in G$, define $|G|$ mutually unbiased bases in $\mathbb{R}^{d}$, where $d= 2^{2^{k-1}}$. Concerning the cases
 $k=1$ and $k=2$ not covered by this result, we remark that a group of order 3 has a four-dimensional real representation that defines three mutually unbiased bases in 
  $\mathbb{R}^4$ and group of order 5 has a 16-dimensional real representation that 
  defines five mutually unbiased bases in 
  $\mathbb{R}^{16}$. We can thus say that any group of odd order is (faithfully) represented
  by real scaled Hadamard matrices of 2-power size. This theorem, in its present form, is only of interest as an existence theorem, since the representing matrices are generally
  of extraordinarily large size. 
  
  We conclude the paper with a short investigation of the character of a representation 
  of a finite group by scaled Hadamard matrices.

\section{Orthogonal geometry over finite fields of 2-power order}

\noindent Let $K$ be a field of characteristic 2 and let $V$ be a vector space of dimension $n$ over $K$. Let $\Quad(V)$ denote the $K$-vector space of quadratic forms defined on $V$. This has dimension $n(n+1)/2$. Let $\Bil(V)$ denote the $K$-vector space of bilinear forms defined on $V\times V$. $\Bil(V)$ has dimension $n^2$. Let $\Alt(V)$ be the subspace of $\Bil(V)$ consisting of alternating bilinear forms. We have $\dim \Alt(V)=n(n-1)/2$.

There is a $K$-linear transformation, $\theta$, say, from $\Bil(V)$ to $\Quad(V)$ defined by
\[
\theta(f)(v)=f(v,v)
\]
for all $v\in V$. The polarization of $\theta(f)$ is the alternating bilinear form
$F$ defined by $F(u,v)=f(u,v)+f(v,u)$ for all $u$ and $v$ in $V$.

It is clear that $\ker \theta =\Alt(V)$ and thus the image of $\theta$ has dimension
$n(n+1)/2$ by the rank-nullity theorem. Since $\dim \Quad(V)=n(n+1)/2$, it follows that
$\theta$ is surjective. Thus given any $Q\in \Quad(V)$, there exists $f\in \Bil(V)$
with $\theta(f)=Q$. We note that any other bilinear form $g$ with $\theta(g)=Q$
is expressible as $g=f+h$, where $h\in\Alt(V)$.

Let $G$ be a finite group of $K$-linear automorphisms of $V$. There are well known $K$-linear actions of $G$ on $\Bil(V)$ and $\Quad(V)$, which we now describe. Let $\sigma$ be an element of $G$ and let $f$ and $Q$ be elements of $\Bil(V)$ and $\Quad(V)$, respectively.
We define $f^\sigma$ and $Q^\sigma$ by
\[
f^\sigma(u,v)=f(\sigma u, \sigma v), \quad Q^\sigma(v)=Q(\sigma v)
\]
for all $u$ and $v$ in $V$. It is elementary to check that $f^\sigma\in \Bil(V)$ and $Q^\sigma\in \Quad(V)$. Furthermore, we have $(f^\sigma)^\tau=f^{\sigma\tau}$ and
$(Q^\sigma)^\tau=Q^{\sigma\tau}$ for all $\tau\in G$. Thus we have representations of
$G$ defined on $\Bil(V)$ and $\Quad(V)$. 

Suppose now that $G$ fixes a non-zero element $Q$ of $\Quad(V)$. 
 We say that $G$ acts as isometries of $Q$ and each element of $G$ is an isometry of $Q$. Likewise, if for some element $f$ of
$\Bil(V)$, we have $f^\sigma=f$ for all $\sigma\in G$, we say that $G$ acts as isometries of $f$, and each element of $G$ is an isometry of $f$. 

The question we wish to address in this section is this. Let $Q$ be a $G$-fixed quadratic form. Is there then some $f\in\Bil(V)$ fixed by $G$ that satisfies $\theta(f)=Q$?
 As we shall see, the answer is no in general, but it is certainly yes if $G$ has odd order. 

We investigate the question as follows. Let $g$ be an element of $\Bil(V)$ with
$\theta(g)=Q$. Then since $Q^\sigma=Q$ for all $\sigma\in G$, we also have
$\theta(g^\sigma)=Q$. Thus, by our earlier discussion, $g^\sigma$ and $g$ differ by an element of $\Alt(V)$, $h_\sigma$, say. Consequently,
\[
g^\sigma=g+h_\sigma.
\]

Let $\tau$ be any element of $G$. We have then
\[
g^{\sigma\tau}=g+h_{\sigma\tau}.
\]
But we also have
\[
g^{\sigma\tau}=(g^\sigma)^\tau=(g+h_\sigma)^\tau=g^\tau+h_\sigma^\tau=g+h_\tau+
h_\sigma^\tau.
\]
We deduce that
\[
h_{\sigma\tau}=h_\tau+h_\sigma^\tau,
\]
an equation that we recognize as a one-cocycle relation. 

\begin{lemma} \label{condition_for_fixed_bilinear_form}

Let $Q$ be a $G$-fixed quadratic form and let $g\in \Bil(V)$ satisfy $\theta(g)=Q$. 
Given $\sigma\in G$, define $h_\sigma\in \Alt(V)$ by $h_\sigma=g+g^\sigma$. Then there exists a $G$-fixed bilinear form $f$ that satisfies $\theta(f)=Q$ if and only if there exists $b\in \Alt(V)$ with
\[
h_\tau=b+b^\tau
\]
for all $\tau$ in $G$.
\end{lemma}

\begin{proof}
Suppose that there exists a $G$-fixed bilinear form $f$ that satisfies $\theta(f)=Q$. Then taking into account the existence of $g$, we must have
$f=g+b$ for some $b\in \Alt(V)$. We apply $\tau\in G$ to obtain
\[
g+b=f=f^\tau=g^\tau+b^\tau=g+h_\tau+b^\tau.
\]
It follows that $h_\tau=b+b^\tau$, as required.

Conversely, suppose that there exists $b\in \Alt(V)$ with
$h_\tau=b+b^\tau$
for all $\tau$ in $G$. We set $f=g+b$, where $g$ is defined as above. Then $\theta(f)=Q$
and 
\[
f^\tau=g^\tau+b^\tau=g+h_\tau+b^\tau=g+b=f,
\]
since $h_\tau=b+b^\tau$. Thus $f$ is a $G$-fixed bilinear form with the desired property.
\end{proof}

\begin{theorem} \label{sufficient_condition_for_fixed_bilinear_form}
Suppose that $G$ is a finite group of isometries of the quadratic form $Q$ in $\Quad(V)$.
Then if $|G|$ is odd, there exists a $G$-fixed bilinear form $f$ with $\theta(f)=Q$.
\end{theorem}

\begin{proof}
 Let $g$ be a bilinear form that satisfies $\theta(g)=Q$. Then, as we have seen, we have
 $g^\sigma=g+h_\sigma$ and $h_{\sigma\tau}=h_\tau+h_\sigma^\tau$ for all $\sigma$ and $\tau$ in $G$. We now define $b\in\Alt(V)$ by setting
 \[
 b=\sum_{\rho\in G} h_\rho.
 \]
 Given $\tau\in G$, we obtain
 \[
 b^\tau=\sum_{\rho\in G} h_\rho^\tau=\sum_{\rho\in G}(h_{\rho\tau}+h_\tau),
 \]
 by the cocycle relation. But 
 \[
 \sum_{\rho\in G} h_{\rho\tau}=\sum_{\rho\in G} h_\rho,
 \]
 since $G$ is a group. Moreover, as $G$ has odd order and $K$ has characteristic two,
 \[
 \sum_{\rho\in G} h_\tau=h_\tau.
 \]
 We deduce that
 \[
 b^\tau=b+h_\tau
 \]
 for all $\tau\in G$. Lemma \ref{condition_for_fixed_bilinear_form} now guarantees the existence
of  a $G$-fixed bilinear form $f$ with $\theta(f)=Q$.
\end{proof}

Let us now show that Theorem \ref{sufficient_condition_for_fixed_bilinear_form}
does not hold for certain finite groups of even order. Suppose that $G$ acts absolutely
irreducibly on $V$. Then, the algebra of all $K$-linear transformations $C$ of $V$
that satisfy $C\sigma=\sigma C$ for all $\sigma\in G$ consists of scalar multiples of the identity. Given this observation, we have the following result.

\begin{theorem} \label{non_existence_of_fixed_bilinear_form}

Suppose that $G$ is a non-trivial finite group of linear transformations of $V$ that fixes the non-zero quadratic form $Q$. Suppose also that $G$ acts absolutely irreducibly on $V$. 
Then, provided $K$ is perfect, there is no $G$-fixed bilinear form $f$ with $\theta(f)=Q$.

\end{theorem}

\begin{proof}
Let $b$ be the polarization of $Q$. Since $G$ acts irreducibly on $V$, $b$ is either 0 or non-degenerate. Suppose if possible that $b=0$. Then we have $Q(u+v)=Q(u)+Q(v)$, for all
$u$ and $v$ in $V$. Moreover, $Q(\lambda u)=\lambda^2Q(u)$ for all 
$u\in V$ and $\lambda\in K$.
Since $K^2=K$ under the hypothesis that $K$ is perfect, and $Q$ is non-zero, the subset
of all $u\in V$ with $Q(u)=0$ is a $G$-invariant subspace of $V$ of codimension 1. Thus, since $G$ acts irreducibly, we must have $\dim V=1$. This in turn implies that $G$ is trivial, since it fixes a non-zero quadratic form defined on a one-dimensional vector space.
This is a contradiction. It follows that $b$ is non-degenerate.

Now, as $G$ acts absolutely irreducibly on $V$, Schur's Lemma implies that any
$G$-fixed bilinear form $F$, say, defined on $V\times V$ is a scalar multiple of $b$ and is thus alternating. This in turn implies that $\theta(F)=0$. We deduce that 
there is no $G$-fixed bilinear form $f$ with $\theta(f)=Q$. 
\end{proof}

\section{Construction of a group from a bilinear form}

\noindent Let $f$ be an element of $\Bil(V)$. We define a multiplication
on the set $V\times K$ 
by setting
\[
(u,\lambda)(v,\mu)=(u+v,\lambda+\mu +f(u,v))
\]
for all $u$ and $v$ in $V$, and all $\lambda$ and $\mu$ in $K$. It is straightforward 
to see that this multiplication is associative, since $f$ is bilinear. Indeed, 
$V\times K$ is group with respect to the multiplication, with identity element $(0,0)$. The inverse of
$(u,\lambda)$ is $(u,f(u,u)+\lambda)$. We let $E_f$ denote this group.

We now make the assumption that $n=\dim V$ is even, say $n=2m$, where $m$ is an integer
and that $\theta(f)$ is a non-degenerate quadratic form, $Q$, say. We shall also assume that
$K$ is the finite field $\mathbb{F}_q$, where $q$ is a power of 2. In this case, the
group $E_f$ has order $q^{2m+1}$ and
the alternating bilinear form $F$ defined by $F(u,v)=f(u,v)+f(v,u)$ is non-degenerate.
These assumptions will remain in place for the remainder of our exposition.

\begin{lemma} \label{centralizer_order}

Let $u$ be a non-zero element of $V$. Then the element $(u,\lambda)$ of $E_f$ has centralizer of order $q^{2m}$. 

\end{lemma}

\begin{proof}
It is clear from the definition of the multiplication in $E_f$ that $(v,\mu)$ commutes with
$(u,\lambda)$ if and only if $f(u,v)=f(v,u)$. Invoking the form $F$, $f(u,v)=f(v,u)$
is equivalent to $F(u,v)=0$. Since $F$ is non-degenerate, and $u$ is non-zero, the elements
$v$ that satisfy $F(u,v)=0$ form a subspace of codimension one in $V$. This proves what we want.
\end{proof}

Let $Z(E_f)$ denote the centre of $E_f$ and $E_f'$ the commutator subgroup. It follows from the previous lemma that $Z(E_f)$ consists of the elements $(0,\lambda)$,
where $\lambda$ runs over $\mathbb{F}_q$, and thus $Z(E_f)$ is elementary abelian of order
$q$. 

\begin{lemma} \label{commutator_subgroup}
$E_f'$ coincides with $Z(E_f)$ and $E_f/E_f'$ is elementary abelian of order $q^{2m}$.
\end{lemma}

\begin{proof}
We set $x=(u,\lambda)$ and $y=(v,\mu)$. Then we calculate that
\[
x^{-1}y^{-1}xy=(0, F(u,v)).
\]
Since $F$ is non-degenerate, if we fix a non-zero $u\in V$, the subset $F(u,v)$, as $v$ runs over $\mathbb{F}_q$, is equal to $\mathbb{F}_q$. Thus, $E_f'=Z(E_f)$. Furthermore, since
the square of any element of $E_f$ lies in $Z(E_f)=E_f'$, $E_f/E_f'$ has exponent 2.
\end{proof}

We proceed to determine the irreducible complex characters of $E_f$. 

\begin{lemma} \label{characters}
The group $E_f$ has exactly $q^{2m}$ irreducible characters of degree one and $q-1$ different irreducible characters of degree $q^m$. If $\chi$ is one of these irreducible characters of degree $q^{m}$, we have $\chi(x)=0$ if $x\not\in Z(E_f)$, and
$\chi(x)=\pm \chi(1)$ if $x\in Z(E_f)$. 
\end{lemma}

\begin{proof}
The number of irreducible characters of $E_f$ of degree one is $|E_f/E_f'|=q^{2m}$. Let 
$x$ be an element of $G$ that is not contained in $Z(E_f)$ and let $C(x)$ denote its centralizer in $E_f$. The second orthogonality relation for group characters implies that
\[
\sum |\psi(x)|^2=|C(x)|=q^{2m},
\]
where the sum extends over all irreducible characters $\psi$ of $E_f$. Now if $\psi$ has degree one, we have $|\psi(x)|^2=1$. Thus the contribution of the characters
of degree one to the sum above is $q^{2m}$. We deduce that $\psi(x)=0$ if $\psi(1)>1$.

Let $\chi$ be an irreducible character of $G$ of degree greater than one and let
$D$ be a complex representation of $G$ with character $\chi$. Schur's Lemma implies that if $x\in Z(E_f)$, then $D(x)$ is a scalar multiple of the identity. Furthermore, since $x^2=1$, this scalar is $\pm 1$. The first orthogonality relation for group characters implies that
\[
\sum_{x\in E_f} |\chi(x)|^2=|E_f|=q^{2m+1}.
\]

Since we know that $\chi(x)=0$ if $x\not\in Z(E_f)$ and $|\chi(x)|=\chi(1)$ if $x\in Z(E_f)$, we deduce that
\[
|Z(E_f)|\chi(1)^2=q^{2m+1}
\]
and hence $\chi(1)=q^m$. Thus $\chi(x)=\pm q^m$ if $x\in Z(E_f)$, as required. Finally, as the sum of the squares of the degrees of the irreducible characters of $E_f$ equals $|E_f|=q^{2m+1}$, we see that $E_f$ has exactly $q-1$ irreducible characters of degree $q^m$.
\end{proof}

We say that a vector $u$ in $V$ is singular with respect to $Q$ if $Q(u)=0$ and that
a subspace $U$ of $V$ is totally singular with respect to $Q$ if all elements of $U$ are singular. 
Suppose then that $U$ is totally singular with respect to $Q$. We set $\widehat{U}=(U,0)$.
It is easy to verify that $\widehat{U}$ is an elementary abelian subgroup
of $E_f$. 

Now, as we are working over a finite field, it is well known that there is a totally singular subspace of $V$ of dimension $m-1$ or $m$. We say that $Q$ has Witt index $m$
if there is a totally singular subspace of $V$ of dimension $m$. We shall assume for the rest of this exposition that $Q$ has Witt index $m$, and we let $U$ be a totally singular subspace of $V$ of dimension $m$. 

\begin{lemma} \label{induced_character}
Suppose that $Q$ has Witt index $m$ and let $U$ be a totally singular subspace of $V$ of dimension $m$. Let $A$ be the subgroup $\widehat{U}Z(E_f)$ of $E_f$. Then $A$ is elementary abelian of order $q^{m+1}$. Moreover, let $\chi$ be an irreducible character of $G$ of degree $q^m$. Then there exists a character $\epsilon$, say, of $A$ with $\epsilon(x)=\pm 1$ for all $x\in A$ and $\chi=\epsilon^{E_f}$. Thus, $\chi$ is the character of a representation of $E_f$ defined over the field of rational numbers.
\end{lemma}

\begin{proof}
It is straightforward to see that $A$ is elementary abelian of order $q^{m+1}$, since 
$\widehat{U}\cap Z(E_f)=1$ and $\widehat{U}$ and $Z(E_f)$ are both elementary abelian.

Let $\chi$ be an irreducible character of $G$ of degree $q^m$. Since $A$ is elementary  abelian, 
all its irreducible characters have degree 1 and take the values $\pm 1$. Thus the restriction of $\chi$ to 
$A$ contains some irreducible character $\epsilon$, say, with  $\epsilon(x)=\pm 1$ for all
$x\in A$. 

The Frobenius reciprocity theorem implies that $\chi$ is a constituent of $\epsilon^{E_f}$.
Since $\epsilon^{E_f}$ has degree $|E_f:A|=q^m$ and $\chi(1)=q^m$, a comparison of degrees shows that  $\chi=\epsilon^{E_f}$. Since $\epsilon$ is defined over the rational numbers,
the same is true of $\chi=\epsilon^{E_f}$.
\end{proof}

We wish now to introduce a group $G$ of $K$-linear automorphisms of $V$ that acts as isometries of $f$. We define an action of $G$ on $E_f$ in the following way. Given $\sigma$ in $G$ and
$(u,\lambda)$ in $E_f$, we set
\[
\sigma(u,\lambda)=(\sigma (u), \lambda).
\]
It is easy to check that $G$ acts as a group of automorphisms of $E_f$, since $G$ acts as isometries of $f$. We note that $G$ fixes $Z(E_f)$ elementwise.

Let $\chi$ be an irreducible complex character of $E_f$ of degree $q^m$. We have shown in Lemma
\ref{induced_character} that $\chi$ is the character of a representation, $X$, say, of $E_f$ defined on a vector space over the rational numbers. Thus, we may certainly take $X$ to be defined on the
real vector space $\mathbb{R}^{q^m}$. We may also assume that $X$ preserves a positive
definite symmetric bilinear form, $B$, say, defined on $\mathbb{R}^{q^m}\times \mathbb{R}^{q^m}$.

Let $\sigma$ be an element of $G$. We define the conjugate representation 
$X^{\sigma}$ by 
\[
X^\sigma(x)=X(\sigma(x))
\]
for all $x\in E_f$. Since $\sigma$ fixes each element of $Z(E_f)$, and 
the character of $X$ vanishes outside $Z(E_f)$, it is clear that $X^\sigma$ also has character $\chi$. 
It follows that 
$X$ and $X^\sigma$ are equivalent representations and thus there exists an automorphism 
$D(\sigma)$, say, of  $\mathbb{R}^{q^m}$ satisfying
\[
X^{\sigma}(x)= X(\sigma(x))=D(\sigma)X(x)D(\sigma)^{-1}
\]
for all $x\in E_f$. Thus, the relation $D(\sigma)X(x)=X(\sigma(x))D(\sigma)$ holds.

\begin{lemma} \label{existence_of_the_two_cocycle}
Let $\sigma$ and $\tau$ be elements of $G$. Then there is a non-zero real number $c(\sigma,\tau)$ that satisfies
\[
D(\sigma\tau)=c(\sigma,\tau)D(\sigma)D(\tau).
\]
\end{lemma}

\begin{proof}
We have
\[
X^{\sigma\tau}(x)=D(\sigma\tau)X(x)D(\sigma\tau)^{-1}
\]
for all $x$ in $E_f$. But
\begin{eqnarray*}
X^{\sigma\tau}(x)=(X^\sigma)^\tau(x)=X^\sigma(\tau(x))&=&D(\sigma)X(\tau(x))D(\sigma)^{-1}\\
&=&D(\sigma)D(\tau)X(x)D(\tau)^{-1}D(\sigma)^{-1}.
\end{eqnarray*}
We see thus that $D(\sigma\tau)^{-1}D(\sigma)D(\tau)$ commutes with all elements
$X(x)$. Since $X$ is an absolutely irreducible representation of $E_f$, Schur's Lemma implies that
$D(\sigma\tau)^{-1}D(\sigma)D(\tau)$ is a scalar multiple of the identity and we consequently have
\[
D(\sigma\tau)=c(\sigma,\tau)D(\sigma)D(\tau),
\]
where $c(\sigma,\tau)$ is a non-zero real number.
\end{proof}

The numbers $c(\sigma, \tau)$ satisfy a well known two-cocycle relation, as we now demonstrate. 

\begin{lemma} \label{cocycle_relation}
Let $\sigma$, $\tau$ and $\rho$ be elements of $G$. Then we have 
\[
c(\sigma\tau, \rho)c(\sigma,\tau)=c(\sigma,\tau\rho)c(\tau, \rho).
\]
\end{lemma}

\begin{proof}
We apply the formula in Lemma \ref{existence_of_the_two_cocycle} to obtain
\begin{eqnarray*}
D((\sigma\tau)\rho)&=&c(\sigma\tau, \rho)D(\sigma\tau)D(\rho)\\
&=&c(\sigma\tau, \rho)c(\sigma,\tau)D(\sigma)D(\tau)D(\rho).
\end{eqnarray*}
Similarly, 
\begin{eqnarray*}
D(\sigma(\tau\rho))&=&c(\sigma,\tau\rho)D(\sigma)D(\tau\rho)\\
&=&c(\sigma,\tau\rho)c(\tau, \rho)D(\sigma)D(\tau)D(\rho).
\end{eqnarray*}
But $(\sigma\tau)\rho=\sigma(\tau\rho)$ by the associative law applied in $G$. It follows that
\[
c(\sigma\tau, \rho)c(\sigma,\tau)=c(\sigma,\tau\rho)c(\tau, \rho).
\]
\end{proof}

We provide a self-contained proof of a standard property of two-cocycles.

\begin{lemma} \label{the_order_of_the_cocycle}
Given $\sigma\in G$, set
\[
d(\sigma)=\prod_{\alpha\in G}c(\sigma,\alpha).
\]
Then we have
\[
c(\sigma,\tau)^{|G|}=d(\sigma)d(\tau)d(\sigma\tau)^{-1}
\]
for all $\tau$ in $G$. 
\end{lemma}

\begin{proof}
We take the equation $c(\sigma\tau, \rho)c(\sigma,\tau)=c(\sigma,\tau\rho)c(\tau, \rho)$
and let $\rho$ run over $G$, with $\sigma$ and $\tau$ fixed. Multiplying left and right hand sides of the $|G|$ equations we have
\[
c(\sigma,\tau)^{|G|}\prod_{\rho}c(\sigma\tau, \rho)=\prod_\rho c(\sigma,\tau\rho)
\prod_\rho c(\tau, \rho).
\]
It is clear that
\[
\prod_{\rho}c(\sigma\tau, \rho)=d(\sigma\tau),\quad \prod_\rho c(\tau, \rho)=d(\tau).
\]
Furthermore, as $G$ is a group, $\tau\rho$ runs over $G$ when $\rho$ runs over $G$. Thus
\[
\prod_\rho c(\sigma,\tau\rho)=d(\sigma).
\]
We have thus shown that $c(\sigma,\tau)^{|G|}d(\sigma\tau)=d(\sigma)d(\tau)$,
as required.
\end{proof}

Recall that we have a  positive
definite symmetric bilinear form, $B$, say, defined on $\mathbb{R}^{q^m}\times \mathbb{R}^{q^m}$ such that $B(X(x)u,X(x)v)=B(u,v)$ for all $x\in E_f$ and all $u$, $v$ in 
$\mathbb{R}^{q^m}$. Given $\sigma$ in $G$, we define $B^\sigma$ by
\[
B^\sigma(u,v)=B(D(\sigma)u,D(\sigma)v)
\]
all $u$, $v$ in 
$\mathbb{R}^{q^m}$. It is clear that $B^\sigma$ is also  a  positive
definite symmetric bilinear form. 

\begin{lemma} \label{scaling_factor}
For each element $\sigma$ of $G$, there is a positive real number $e(\sigma)$ such that
$B^\sigma=e(\sigma)B$.
\end{lemma}

\begin{proof}
We show that $B^\sigma$ is $X$-invariant. Given $x\in E_f$ and $u$, $v$ in 
$\mathbb{R}^{q^m}$, we have
\[
B^\sigma(X(x)u,X(x)v)=B(D(\sigma)X(x)u,D(\sigma)X(x)v).
\]
But we know that $D(\sigma)X(x)=X(\sigma(x))D(\sigma)$ and thus
\begin{eqnarray*}
B^\sigma(X(x)u,X(x)v)&=&B(X(\sigma(x))D(\sigma)u,X(\sigma(x))D(\sigma)v)\\
&=&B(D(\sigma)u,D(\sigma)v)=B^\sigma(u,v).
\end{eqnarray*}
This proves that $B^\sigma$ is $X$-invariant. Now as $B$ is also $X$-invariant, and
$X$ is absolutely irreducible, Schur's Lemma implies that there is a real number
$e(\sigma)$ with $B^\sigma=e(\sigma)B$. As $B$ and $B^\sigma$ are both positive definite,
$e(\sigma)$ is positive.
\end{proof}

We proceed to relate the numbers $e(\sigma)$ to the numbers $c(\sigma,\tau)$. 

\begin{lemma} \label{square_relation}
For all $\sigma$ and $\tau$ in $G$, we have
\[
c(\sigma,\tau)^2e(\sigma)e(\tau)=e(\sigma\tau).
\]
\end{lemma}

\begin{proof}
Let $u$ and $v$ be elements of
$\mathbb{R}^{q^m}$. We have
\begin{eqnarray*}
B^{\sigma\tau}(u,v)&=&B(D(\sigma\tau)u,D(\sigma\tau)v)\\
&=&B(c(\sigma,\tau)D(\sigma)D(\tau)u,c(\sigma,\tau)D(\sigma)D(\tau)v)\\
&=&c(\sigma,\tau)^2e(\sigma)e(\tau)B(u,v).
\end{eqnarray*}
Thus $B^{\sigma\tau}=c(\sigma,\tau)^2e(\sigma)e(\tau)B$. But we also have
$B^{\sigma\tau}=e(\sigma\tau)B$ and we deduce that
\[
c(\sigma,\tau)^2e(\sigma)e(\tau)=e(\sigma\tau).
\]
\end{proof}

We now make the assumption that $G$ has odd order, with $|G|=2r+1$, where $r$ is a positive integer. 

\begin{lemma} \label{coboundary_relation}
Suppose that $|G|=2r+1$, where $r$ is a positive integer. For $\sigma\in G$, we set
\[
b(\sigma)=d(\sigma)e(\sigma)^r,
\]
where $d(\sigma)$ and $e(\sigma)$ are defined as in Lemmas \ref{the_order_of_the_cocycle} and \ref{scaling_factor}, respectively.  Then we have
\[
c(\sigma,\tau)=b(\sigma)b(\tau)b(\sigma\tau)^{-1}
\]
for all $\tau\in G$.
\end{lemma}

\begin{proof}
We have shown that 
\[
c(\sigma,\tau)^{2r+1}=d(\sigma)d(\tau)d(\sigma\tau)^{-1}
\]
in Lemma \ref{the_order_of_the_cocycle} and
\[
c(\sigma,\tau)^2=e(\sigma\tau)e(\sigma)^{-1}e(\tau)^{-1}
\]
in Lemma \ref{scaling_factor}. It follows that
\[
c(\sigma,\tau)^{2r}=e(\sigma\tau)^re(\sigma)^{-r}e(\tau)^{-r}
\]
and hence 
\[
c(\sigma,\tau)=d(\sigma)e(\sigma)^r d(\tau)e(\tau)^r d(\sigma\tau)^{-1}e(\sigma\tau)^{-r}.
\]
This proves what we claimed.
\end{proof}

We now use the scalars $b(\sigma)$ to define a representation of $G$.

\begin{lemma} \label{representation}
Suppose that $|G|$ is odd. 
Given $\sigma\in G$, define $R(\sigma)=b(\sigma)D(\sigma)$. Then we have 
\[
R(\sigma\tau)=R(\sigma)R(\tau)
\]
for all $\tau\in G$. Thus $R$ is a representation of $G$ defined on $\mathbb{R}^{q^m}$ that satisfies $R(\sigma)X(x)=X(\sigma(x))R(\sigma)$ for all $x\in E_f$.
\end{lemma}

\begin{proof}
We have
\begin{eqnarray*}
R(\sigma\tau)=b(\sigma\tau)D(\sigma\tau)&=&b(\sigma\tau)c(\sigma,\tau)D(\sigma)D(\tau)\\
&=&b(\sigma\tau)c(\sigma,\tau)b(\sigma)^{-1}b(\tau)^{-1}R(\sigma)R(\tau)\\
&=&R(\sigma)R(\tau),
\end{eqnarray*}
since $c(\sigma,\tau)=b(\sigma)b(\tau)b(\sigma\tau)^{-1}$. This proves that $R$ is a representation of $G$, and $R(\sigma)X(x)=X(\sigma(x))R(\sigma)$, since the same equation
holds when $D(\sigma)$ replaces $R(\sigma)$.
\end{proof}

Finally, we prove that the elements $R(\sigma)$ leave $B$ invariant. 

\begin{lemma}
Suppose that $|G|$ is odd. 
Let $u$ and $v$ be arbitrary elements of  
$\mathbb{R}^{q^m}$ and let $\sigma$ be an element of $G$. Then we have
\[
B(R(\sigma)u,R(\sigma)v)=B(u,v).
\]
Thus $R(G)$ acts as isometries of $B$. 
\end{lemma}

\begin{proof}
The proof of Lemma \ref{scaling_factor} implies that there is a non-zero real scalar
$\mu(\sigma)$, say, such that 
\[
B(R(\sigma)u,R(\sigma)v)=\mu(\sigma)B(u,v).
\]
Since $R$ is a representation, it is clear that $\mu$ is a homomorphism from
$G$ into the multiplicative group of non-zero real numbers. However, as we are assuming that
$|G|$ is odd, this homomorphism must be trivial and thus $B(R(\sigma)u,R(\sigma)v)=B(u,v)$, as required.
\end{proof}

\section{Partial orthogonal spreads, real mutually unbiased bases and Hadamard matrices}

\noindent We assume as before $V$ is a vector space of even dimension $2m$ over
$\mathbb{F}_q$, where $q$ is a power of 2. Let $Q$ be a non-degenerate quadratic form
of Witt index $m$ defined on $V$. 

We call a set of $m$-dimensional totally singular subspaces of $V$ a \emph{partial orthogonal spread} for $V$ and $Q$ if the subspaces intersect
trivially in pairs. Since $V$ contains precisely
\[
 (q^{m-1}+1)(q^m-1)
\]
non-zero singular vectors, it follows that a partial orthogonal spread contains at most $q^{m-1}+1$ subspaces. We say that a spread is complete if it contains exactly
$q^{m-1}+1$ subspaces. It can be proved that if $m$ is odd, a partial orthogonal spread contains at most two subspaces, whereas if $m$ is even, complete spreads exist.

We wish to consider a partial orthogonal spread constructed in the following way. Let
$G$ be a group of isometries of $Q$ of odd order. Suppose that $U$ is a totally singular 
subspace of $V$ of dimension $m$. Then it is clear that $\sigma(U)$ is also totally singular
of dimension $m$ for any $\sigma$ in $G$. We make the hypothesis that $\sigma(U)\cap \tau(U)=0$ if $\sigma$ and $\tau$ are different elements of $G$. Then the $|G|$ subspaces
$\sigma(U)$, $\sigma\in G$, form a partial orthogonal spread that is permuted transitively and regularly by $G$. 

While the requirements of the hypothesis may seem difficult to meet, there are two cases in which they are fulfilled. One occurs when $m$ is even. In this case, for any $q$ that is a power of 2, there is a cyclic subgroup of order $q^{m-1}+1$ that permutes transitively
and regularly the subspaces in a complete orthogonal spread. A second case is as follows.
Let $G$ be an arbitrary group of odd order $2k+1$, where $k\geq 3$. Then there is a vector space $V$ of dimension $2^k$ over $\mathbb{F}_2$ and a partial orthogonal spread of size $|G|$ in $V$ that is permuted transitively and regularly by $G$. 

We continue with the hypothesis that $G$ is a group of isometries of odd order of $Q$ that
transitively and regularly permutes the subspaces in a partial orthogonal spread of size
$|G|$. By Theorem \ref{sufficient_condition_for_fixed_bilinear_form}, there is a $G$-invariant bilinear form $f$, say, with $\theta(f)=Q$. $G$ then acts as a group of automorphisms of the group $E_f$ of order $q^{2m+1}$. 

Let $X$ be an irreducible real representation of $E_f$ of degree $q^m$ and let
$\chi$ be its character. We have a corresponding representation $R$ of $G$ that satisfies
\[
R(\sigma)X(x)=X(\sigma(x))R(\sigma)
\]
for all $x\in E_f$ and preserves
an $X$-invariant positive definite symmetric bilinear form, $B$. 

Let $U$ be an $m$-dimensional totally singular subspace of $V$ and let $\widehat{U}$ be
the corresponding elementary abelian subgroup of $E_f$ of order $q^m$.
 Since $\chi(x)=0$ for any non-identity element $x$ of $\widehat{U}$ and $\chi(1)=q^m=
|\widehat{U}|$, it follows that $X$ defines the regular representation of $\widehat{U}$ by restriction. Now the irreducible characters of $\widehat{U}$ have degree 1, since the group is abelian, and they take only the values $\pm 1$, since each element has order 1 or 2. The underlying
real vector space $\mathbb{R}^{q^m}$ then splits as the direct sum of $q^m$ one-dimensional subspaces which are each invariant
under the action of $X(\widehat{U})$. Each of these one-dimensional subspaces determines a different irreducible (linear) character of $\widehat{U}$.

As  the elements of $X(\widehat{U})$ preserve the  symmetric bilinear form $B$, the one-dimensional subspaces of $\mathbb{R}^{q^m}$ that
we have described above are mutually orthogonal, and we may choose an orthonormal basis $Z_1$, \dots, 
$Z_{q^m}$ of $\mathbb{R}^{q^m}$ such that
\[
 X(x)Z_i=\lambda_i (x)Z_i
\]
for all $x\in \widehat{U}$ and $1\leq i\leq q^m$. Here, $\lambda_i$ is an irreducible character of $\widehat{U}$, 
with $\lambda_i(x)=\pm 1$, and $\lambda_1$, \dots, $\lambda_{q^m}$ constitute all the irreducible characters of $\widehat{U}$.

We are assuming that we can choose $U$
so that $\sigma(U)$, $\sigma\in G$, is a partial orthogonal spread of size $|G|$.
Then the $|G|$
 subgroups $\sigma(\widehat{U})$, $\sigma\in G$, of $E_f$ 
 are all elementary abelian and they intersect trivially in pairs. 
It follows from the equation $R(\sigma)X(x)=X(\sigma(x))R(\sigma)$ for all $x\in \widehat{U}$ that
$R(\sigma)Z_1$, \dots, $R(\sigma)Z_{q^m}$ is an orthonormal basis of eigenvectors of $X(\sigma(\widehat{U}))$ for
all $\sigma\in G$. We thus obtain $|G|$ orthonormal bases of $\mathbb{R}^{q^m}$ from these subgroups.

The basic fact that we need to know
is that these orthonormal bases are mutually unbiased. A proof of this fact is presented in the next theorem,
in the context of properties of elementary abelian 2-subgroups of real orthogonal matrices. We follow the basic ideas laid out in the proof
of Theorem 3.2 of \cite{BBRV}.

\begin{theorem} \label{mutually_unbiased}
 
Let $d=2^\ell$, where $\ell$ is a positive integer. Let $M$ and $N$ be elementary abelian $2$-subgroups of $d\times d$ real orthogonal matrices, with
$|M|=|N|=d$. Suppose that for all pairs $(x,y)\neq (I,I)$ in $M\times N$, 
 $\tr(xy)=0$,
where $\tr$ denotes the trace of the matrix. Then any two orthonormal bases of $\mathbb{R}^d$ consisting of simultaneous eigenvectors of
$M$ and $N$, respectively, are mutually unbiased. 
\end{theorem}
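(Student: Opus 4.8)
The plan is to reduce the statement to a single trace computation that feeds directly on the hypothesis $\tr(xy)=0$. First I would record the structural facts about $M$ and $N$. Every element $x$ of $M$ is a real orthogonal matrix with $x^2=I$, so $x=x^{-1}=x^{\mathsf{T}}$ is a symmetric involution with eigenvalues $\pm 1$. As $M$ is abelian, its elements are simultaneously diagonalizable by an orthonormal basis, and each common eigenvector determines a linear character $\lambda:M\to\{\pm 1\}$. Applying the hypothesis with $y=I$ gives $\tr(x)=0$ for every $x\neq I$ in $M$, so the character of the representation of $M$ on $\mathbb{R}^d$ equals $d$ at the identity and $0$ elsewhere; this is precisely the regular character. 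Since $M$ is elementary abelian of order $d=2^k$ it has exactly $d$ distinct characters, and by the linear independence of characters each occurs with multiplicity one. Hence an orthonormal basis $u_1,\dots,u_d$ of simultaneous eigenvectors of $M$ is indexed bijectively by the characters $\lambda_1,\dots,\lambda_d$ of $M$, and likewise an orthonormal basis $v_1,\dots,v_d$ of simultaneous eigenvectors of $N$ corresponds to the characters $\mu_1,\dots,\mu_d$ of $N$.

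The key step is a projection formula. Because each character of $M$ occurs exactly once, the rank-one orthogonal projection onto $u_i$ can be written group-theoretically as
\[
u_iu_i^{\mathsf{T}}=\frac{1}{d}\sum_{x\in M}\lambda_i(x)\,x,
\]
and similarly $v_jv_j^{\mathsf{T}}=\frac{1}{d}\sum_{y\in N}\mu_j(y)\,y$. I would verify this by applying the right-hand side to a basis vector $u_k$ and invoking the orthogonality relations for the real characters $\lambda_i(x)=\pm1$, so that the coefficient collapses to $\delta_{ik}$.

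With these two expressions in hand, the proof finishes by a direct computation. The squared inner product is
\[
\langle u_i,v_j\rangle^2=u_i^{\mathsf{T}}(v_jv_j^{\mathsf{T}})u_i=\tr\bigl((v_jv_j^{\mathsf{T}})(u_iu_i^{\mathsf{T}})\bigr)
=\frac{1}{d^2}\sum_{x\in M}\sum_{y\in N}\lambda_i(x)\mu_j(y)\,\tr(yx).
\]
Now the hypothesis enters decisively: $\tr(yx)=\tr(xy)=0$ for every pair $(x,y)\neq(I,I)$, while the single surviving term $(x,y)=(I,I)$ contributes $\lambda_i(I)\mu_j(I)\tr(I)=d$. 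Hence $\langle u_i,v_j\rangle^2=d/d^2=1/d$ for all $i$ and $j$, which is exactly the condition that the two bases are mutually unbiased.

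I expect the main obstacle to lie not in the final manipulation but in justifying the projection formula, and beneath it the multiplicity-one claim: the whole argument collapses if some character of $M$ or $N$ occurs with multiplicity greater than one, since then $u_iu_i^{\mathsf{T}}$ is only a proper sub-projection of the isotypic projection and the clean formula fails. This is precisely why the case $y=I$ (equivalently $x=I$) of the hypothesis must be extracted first and used to force the representation to be regular; once that is secured, the remainder is the short trace identity above.
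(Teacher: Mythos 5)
Your proof is correct. It uses the same two ingredients as the paper --- the identification of rank-one projections with character sums over the group, and the trace identity $\tr(X_iX_i^TY_jY_j^T)=(X_i^TY_j)^2$ --- but runs the computation in the opposite direction. The paper expands each group element as $x=\sum_i\lambda_i(x)X_iX_i^T$, obtains $\tr(xy)=\sum_{i,j}\lambda_i(x)\mu_j(y)(X_i^TY_j)^2$, and then treats the hypothesis as a system $\Psi=C\Phi$ of $d^2$ linear equations whose coefficient matrix $C$ is the character table of $M\times N$, inverting it via $CC^T=d^2I$. You instead invert the character relation at the outset, writing $u_iu_i^{\mathsf{T}}=\frac{1}{d}\sum_{x\in M}\lambda_i(x)\,x$, so that $\langle u_i,v_j\rangle^2$ is directly a linear combination of the traces $\tr(xy)$ supplied by the hypothesis and no system needs to be solved; the two computations are transposes of one another, both resting on the orthogonality of the $\pm1$-valued characters of an elementary abelian $2$-group. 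One genuine improvement on your side: you explicitly use the case $y=I$ of the hypothesis to show that $M$ (and likewise $N$) acts by the regular representation, so each character occurs with multiplicity one. That fact is needed in both arguments --- your projection formula fails if some character repeats, and the paper's matrix $C$ is the full character table with $CC^T=d^2I$ only when the $\lambda_i$ are distinct --- yet the paper simply asserts in its opening sentence that the $\lambda_i$ are the irreducible characters of $M$, leaving the justification to the separate regular-representation discussion of Section 5 in its intended application. Your version makes the theorem genuinely self-contained.
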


\begin{proof}
 We may assume that we have an orthonormal basis 
$X_1$, \dots, $X_{d}$ of $\mathbb{R}^{d}$ such that
\[
 xX_i=\lambda_i (x)X_i
\]
for all $x\in M$ and $1\leq i\leq d$, where $\lambda_1$, \dots, $\lambda_d$ are the irreducible characters of $M$.
Likewise, we may assume that we have a second orthonormal basis 
$Y_1$, \dots, $Y_{d}$ such that
\[
 yY_i=\mu_i (y)Y_i
\]
for all $y\in N$ and $1\leq i\leq d$, where $\mu_1$, \dots, $\mu_d$ are the irreducible characters of $N$. 

Consider now, for a given element $x$ of $M$, the $d\times d$ symmetric matrix
\[
X= \sum_{i=1}^d \lambda_i(x) X_iX_i^T.
\]
Since the vectors $X_1$, \dots, $X_d$ are orthonormal, we have 
\[
 XX_j=\sum_{i=1}^d \lambda_i(x) (X_iX_i^T)X_j=\sum_{i=1}^d \lambda_i(x) X_i(X_i^TX_j)=\lambda_j(x)X_j
\]
for any $X_j$. Thus $X$ acts on the basis in an identical manner to $x$ and we deduce that $X=x$. In the same way, we have
\[
 y=\sum_{i=1}^d \mu_i(y) Y_iY_i^T
\]
for any $y$ in $N$. 

Taking $x$ and $y$ as above, we obtain
\[
 xy=\sum_{1\leq i,j\leq d} \lambda_i(x) \mu_j(y) X_iX_i^TY_jY_j^T,
\]
and then we may take traces to obtain
\[
 \tr(xy)=\sum_{1\leq i,j\leq d} \lambda_i(x) \mu_j(y) \tr(X_iX_i^TY_jY_j^T). 
\]
However, by simple properties of the trace function,
\[
 \tr(X_iX_i^TY_jY_j^T)=\tr(X_i^TY_jY_j^TX_i)=(X_i^TY_j)^2
\]
and thus we have
\[
 \tr(xy)=\sum_{1\leq i,j\leq d} \lambda_i(x) \mu_j(y) (X_i^TY_j)^2.
\]

Now by hypothesis, $\tr(xy)=0$  and hence
\[
 0=\sum_{1\leq i,j\leq d} \lambda_i(x) \mu_j(y) (X_i^TY_j)^2,
\]
unless $x=y=I$. When $x=y=I$, we have
\[
 d=\sum_{1\leq i,j\leq d} (X_i^TY_j)^2.
\]

As $(x,y)$ runs over the elements of the direct product $M\times N$, we obtain a system of $d^2$ linear
equations in the $d^2$ unknowns $(X_i^TY_j)^2$, $1\le i,j\leq d$. The coefficient matrix of the system is
the $d^2\times d^2$ matrix $C$, say, which is the character table of $M\times N$. Taking $(I,I)$ as the first
element of $M\times N$, all entries of the first row of $C$ equal 1, and $C$ satisfies $CC^T=d^2I$. 

Our system of equations takes the form 
\[
 \Psi=C\Phi,
\]
 where $\Phi$ is the column vector of size $d^2$, whose entries are the numbers $(X_i^TY_j)^2$, and
$\Psi$ has its first entry equal to $d$, and all other entries equal to 0. Then we have
\[
 d^2 \Phi=C^T\Psi,
\]
and, since all entries of the first column of $C^T$ are 1, we deduce that all entries of $\Phi$ equal
$1/d$. It follows that
\[
 (X_i^TY_j)^2=\frac{1}{d}
\]
for all $i$ and $j$, and this equality implies that the bases are mutually unbiased.
\end{proof}

\begin{corollary} \label{generation_of_bases}
Let $V$ be a vector space of even dimension $2m$ over $\mathbb{F}_q$, where $q$ is a power of $2$ and $m$ is even. Let $Q$ be a non-degenerate quadratic form of Witt index $m$ defined on $V$ and let $G$ be a group of isometries of $Q$ of odd order. 

Suppose that $G$ permutes transitively and regularly $|G|$ totally singular subspaces of dimension $m$ in a partial orthogonal spread with respect to $Q$.

Then $G$ has a faithful orthogonal representation, $R$, say, on $\mathbb{R}^{q^m}$ such  that with respect to an orthonormal basis of  $\mathbb{R}^{q^m}$, the
matrices $R(\sigma)$, $\sigma\in G$, define $|G|$ mutually unbiased
bases transitively permuted by $G$. 

Thus, $G$ is faithfully represented by real scaled $q^m\times q^m$ Hadamard matrices.
\end{corollary}

\section{Construction of real mutually unbiased bases by group elements}

\noindent We provide proofs in this section for the two main theorems announced in the introduction to this paper. We require two results describing group actions on complete/partial orthogonal spreads. 

\begin{theorem} \label{complete_orthogonal_spead}
Let $q$ be a power of $2$ and let $V$ be a vector space of dimension $2m$ over $\mathbb{F}_q$, where $m$ is even. Let $Q\in \Quad(V)$ be a non-degenerate quadratic form of Witt index $m$. Then there exists a cyclic group $G$, say, of isometries of $Q$ of order $q^{m-1}+1$
and a totally singular subspace $U$ of $V$ of dimension $m$ such that the subspaces
$\sigma(U)$, $\sigma\in G$, are a complete orthogonal spread in $V$ of $q^{m-1}+1$ $m$-dimensional subspaces.
\end{theorem}

This theorem is proved in \cite{KW} (see Theorem 6.5 and the proof of Theorem 3.3 (i) in \cite{KW}). We are grateful to Bill Kantor for pointing out to us the existence of this theorem. 

The following method of generating real mutually unbiased bases is a consequence
of Theorem \ref{complete_orthogonal_spead} and Corollary \ref{generation_of_bases}.

\begin{corollary} \label{generation_of_real_mutually_unbiased_bases}
Let $q$ be a power of $2$ and let
$r$ be a positive integer. Then there exists a $q^{2r}\times q^{2r}$ real orthogonal matrix $D$, say, of multiplicative order $q^{2r-1}+1$,  whose 
$q^{2r-1}
+1$ powers $D$, \dots, $D^{q^{2r-1}+1}=I$ define $q^{2r-1}+1$ mutually unbiased bases in $\mathbb{R}^{q^{2r}}$. Thus the scaled matrices $q^rD$, \dots, $q^rD^{q^{2r-1}}$ are $q^{2r-1}$ different real Hadamard matrices. 
\end{corollary}

As we mentioned in the introduction, we can thus achieve the maximum number $2^{2r-1}+1$ of real mutually unbiased bases in $\mathbb{R}^{2^{2r}}$ by using the elements of a cyclic subgroup of real orthogonal matrices. This is an analogue of a result for complex
mutually unbiased bases in  $\mathbb{C}^{2^{r}}$, where we use a cyclic group of unitary matrices of order $2^r+1$. See, for example, Theorem 1 of \cite{G}.

It may be of interest to know if we can achieve this maximum number of real mutually unbiased bases using non-cyclic finite groups of orthogonal matrices. At present, we know of
only one such non-cyclic generation occurrence, which we shall describe here. We have shown
in \cite{G2}, p.8, that an elementary abelian group of order 9 acts in a transitive manner on a complete orthogonal spread defined on a quadratic space of dimension 8 over
$\mathbb{F}_2$. Corollary \ref{generation_of_bases} implies the following generation property. 

\begin{corollary} \label{non_cyclic_generation_of_real_mutually_unbiased_bases}
There exists a finite subgroup of real orthogonal $16\times 16$ matrices isomorphic to an elementary abelian group of order $9$ whose elements determine nine mutually unbiased bases
in $\mathbb{R}^{16}$.
\end{corollary}

We observe that if $p$ is a prime divisor of $q^{2r-1}+1$, where $q$ is a power of 2, then
2 has even order modulo $p$. When we consider the quadratic character of $2$ modulo $p$, this implies that  $p\not\equiv 7\bmod 8$. Thus, for example, $7$ does not divide $q^{2r-1}+1$ for any choice of $r$ and 2-power $q$, and we cannot realize seven mutually unbiased in $\mathbb{R}^{q^{2r}}$ using a group of order 7 by this process (although, as we shall see next, it can be done by a different method in 
$\mathbb{R}^{16}$).

We turn to consideration of the second main theme of this paper.

\begin{corollary} \label{groups_of_odd_order}
 Let $G$ be an arbitrary finite group of odd order $2k+1$, where $k\geq 3$. Then $G$ has a real representation $R$, say, of degree 
 $2^{2^{k-1}}$ such that the elements $R(\sigma)$, $\sigma\in G$, define $|G|$ mutually unbiased bases in $\mathbb{R}^{d}$, where $d= 2^{2^{k-1}}$.
 \end{corollary}
 
 \begin{proof}
Corollary 1 of \cite{G2} shows that $G$ acts in a regular transitive manner on a partial orthogonal spread of size $|G|$ defined on a quadratic space of dimension $2^k$ over
$\mathbb{F}_2$. We deduce the required corollary from Corollary \ref{generation_of_bases}.
 \end{proof}
 
 We note that Corollary \ref{generation_of_real_mutually_unbiased_bases} implies that
 a group of order 5 defines five real mutually unbiased bases in $\mathbb{R}^{16}$ (take $q=4$, $r=1$) and  a group of order 3 defines three real mutually unbiased bases in $\mathbb{R}^{4}$ (take $q=2$, $r=1$). These are the two cases not addressed in Corollary \ref{groups_of_odd_order}. Thus,  every group of odd order is (faithfully) represented
  by real scaled Hadamard matrices of 2-power size.

\section{Character theory}

\noindent As we have shown that a group of odd order is represented by real Hadamard matrices, it is of interest to investigate the character of any such representation. Corollary \ref{odd_order_case} is our main finding.

\begin{lemma} \label{the_S_matrix}
Let $D$ be a real orthogonal matrix representation of the finite group $G$ and let 
$\theta_D$ be the character of $D$. Let $\chi$ be an irreducible complex character of $G$. Define the matrix $S_\chi$ by
\[
S_\chi=\frac{\chi(1)}{|G|}\sum_{g\in G} \chi(g) D(g).
\]
Then $S_\chi$ is a complex hermitian matrix that satisfies $S_\chi^2=S_\chi$ ($S_\chi$ is an idempotent). Furthermore, if $\tr(S_\chi)$ denotes the trace of $S_\chi$, we have
\[
\tr(S_\chi)=\chi(1)(\theta_D,\chi)
\]
and this trace also equals the rank of $S_\chi$.

\end{lemma}

\begin{proof}
We set $S=S_\chi$ for simplicity.
Since $D$ is orthogonal, $D(g)^T=D(g^{-1})$ for each element $g$ of $G$. Thus,
\[
S^T=\frac{\chi(1)}{|G|}\sum_{g\in G} \chi(g)D(g)^T=\frac{\chi(1)}{|G|}\sum_{g\in G}
\chi(g)D(g^{-1}).
\]
We take complex conjugates above to obtain
\[
\overline{S}^T=\frac{\chi(1)}{|G|}\sum_{g\in G}\overline{\chi(g)}D(g^{-1}),
\]
where we use the fact that the matrices $D(g)$ are real. However, as $\chi$ is a complex character, $\overline{\chi(g)}=\chi(g^{-1})$. Thus
\[
\overline{S}^T=\frac{\chi(1)}{|G|}\sum_{g\in G} \chi(g^{-1})D(g^{-1})=S
\]
and we see that $S$ is hermitian.

We now calculate that
\[
S^2=\frac{\chi(1)^2}{|G|^2}\sum_{g,h\in G}\chi(g)\chi(h)D(g)D(h)=
\frac{\chi(1)^2}{|G|^2}\sum_{g,h\in G}\chi(g)\chi(h)D(gh).
\]
Thus, given $x$ in $G$, the coefficient of $D(x)$ in $S^2$ is
\[
\frac{\chi(1)^2}{|G|^2}\sum_{g\in G}\chi(g)\chi(g^{-1}x).
\]

The generalized orthogonality relation (Theorem 2.13 of \cite{I}) shows that
\[
\frac{\chi(1)^2}{|G|^2}\sum_{g\in G} \chi(g)\chi(g^{-1}x)=\frac{\chi(1)}{|G|}\chi(x)
\]
and we deduce that $D(x)$ has the same coefficient in $S^2$ as it has in $S$. Therefore,
$S^2=S$, and we have proved that $S$ is an idempotent.

When we take the trace of $S$, we obtain
\[
\tr(S)=\frac{\chi(1)}{|G|}\sum_{g\in G} \chi(g)\theta_D(g)=\chi(1)(\theta_D,\chi),
\]
where we use the fact that $\theta_D$ is real-valued, since it is the character of a real representation. Finally, as $S$ is an idempotent, its eigenvalues are 0 and 1.
Thus, $\tr(S)$ is the multiplicity of 1 as an eigenvalue of $S$. But $S$ can be diagonalized, as it is an idempotent, and thus its rank is equal to the multiplicity of 1 as an eigenvalue.
\end{proof}

\begin{corollary} \label{chi_not_a_constituent_of_theta}

In the circumstances of Lemma \ref{the_S_matrix}, suppose that $\chi$ is not a constituent of $\theta_D$. Then $S_\chi=0$. Thus, if for each $g$ in $G$, $d_{ij}(g)$ is the $i,j$-entry of $D(g)$,
\[
\sum_{g\in G} d_{ij}(g)\chi(g)=0.
\]
\end{corollary}

\begin{proof}
The hypothesis implies that $(\theta_D, \chi)=0$ and hence $\tr S_\chi=0$ by Lemma \ref{the_S_matrix}. As we showed that $\tr(S_\chi)$ equals the rank of $S_\chi$, $S_\chi$ has rank 0
and hence $S_\chi=0$. Now the $i,j$-entry of $S_\chi$ is
\[
\sum_{g\in G} d_{ij}(g)\chi(g)
\]
and we deduce that this sum is zero for all $i$ and $j$.
\end{proof}

\begin{theorem} \label{non_zero_multiplicity}
Let $D$ be a representation of the finite group $G$ by scaled $m^2\times m^2$ Hadamard matrices, where $m$ is an even positive integer. Let $\chi$ be an irreducible complex character of $G$ of odd degree. Then if $\chi$ is not the trivial character of $G$, $\chi$ is a constituent of $\theta_D$. The same conclusion holds if 
$\chi$ is the trivial character, provided that $|G|$ is even.
\end{theorem}

\begin{proof}
Let $d_{ij}(g)$ be the $i,j$-entry of $D(g)$. If $g\neq 1$, we have
\[
d_{ij}(g)=\frac{\epsilon_{ij}(g)}{m},
\]
where $\epsilon_{ij}(g)=\pm 1$.

Suppose that $\chi$ is not a constituent of $\theta_D$. Then  choosing arbitrary indices $i$ and $j$ with $i\neq j$, Corollary \ref{chi_not_a_constituent_of_theta} implies that
\[
\sum_{g\neq 1} \epsilon_{ij}(g)\chi(g)=0.
\]
Let us assume first that $\chi$ is not the trivial character. The orthogonality relations for group characters show that
\[
\sum_{g}\chi(g)=0,
\]
the sum extending over all elements $g$ of $G$, and hence
\[
\sum_{g\neq 1}\chi(g)=-\chi(1).
\]

Since $\epsilon_{ij}(g)=\pm 1$ for all non-identity $g$, and the elements $\chi(g)$ are algebraic integers, we see that
\[
\sum_{g\neq 1} \epsilon_{ij}(g)\chi(g)=2\alpha+\sum_{g\neq 1}\chi(g),
\]
where $\alpha$ is an algebraic integer. It follows that
\[
2\alpha=\chi(1).
\]
This is impossible, as $\chi(1)/2$ is not an algebraic integer, since $\chi(1)$ is assumed to be odd. We deduce that $\chi$ is a constituent of $\theta_D$ in this case.

We now consider the case that $\chi$ is the trivial character and $|G|$ is even. If
the trivial character is not a constituent of $\theta_D$, we have
\[
\sum_{g\neq 1}\epsilon_{ij}(g)=0.
\]
This is also impossible, for the sum above is clearly odd, there being an odd number
of non-identity elements in $G$. Thus the trivial character is also a constituent of
$\theta_D$ when $|G|$ is even.
\end{proof}

\begin{corollary} \label{abelian_case}

Suppose that $G$ is a finite abelian group. Then in the circumstances of Theorem \ref{non_zero_multiplicity}, we have $|G|-1\leq m^2$ if $|G|$ is odd, and 
$|G|\leq m^2$ if $|G|$ is even.
\end{corollary}

\begin{corollary} \label{odd_order_case}

Suppose that $G$ is a group of odd order. Then in the circumstances of Theorem \ref{non_zero_multiplicity}, each non-trivial irreducible character of $G$ occurs as a constituent of $\theta_D$.
\end{corollary}

\begin{proof}
This follows from Theorem \ref{non_zero_multiplicity}, since the degree of each irreducible character divides $|G|$ and hence is an odd number.
\end{proof}

\end{document}